\renewcommand*{\eqref}[1]{%
  \hyperref[{#1}]{\textup{\tagform@{\ref*{#1}}}}%
}
\newtheorem{theorem}{Theorem}[section]
\newtheorem{lemma}{Lemma}[section]
\newtheorem*{remark}{Remark}
\newcommand{\R}{\mathbb R}
\newcommand{\1}{\mathds{1}}
\numberwithin{equation}{section}
\title[Eventual regularization for the 3D Muskat problem]{Eventual regularization for the 3D Muskat problem: Lipschitz for finite time implies global existence. }
\author{Stephen Cameron}
\address{Courant Institute, New York University, New York, NY 10012}
\email{spc6@cims.nyu.edu}
\begin{document}

\maketitle

\begin{abstract}
We show that for any fixed Lipschitz constant $L$, there is a time $T^*<\infty$ depending only on $L$ such that if $f:[0,T^*]\times \R^{2}\to [0,1]$ is a classical solution of the stable Muskat problem with $||\nabla_x f||_{L^\infty}\leq L$, then $f$ extends uniquely to a classical solution defined for all time.  Our proof is short, quantitative, and explicit.
\end{abstract}

\section{Introduction}

We study classical solutions $f:[0,T]\times \R^2\to \R$ to the nonlocal parabolic equation 
\begin{equation}\label{e:fequation}
\partial_t f(t,x) = \int\limits_{\R^{2}} \frac{\delta_h f(t,x) - \nabla_x f(t,x)\cdot h}{(\delta_h f(t,x)^2 + |h|^2)^{3/2}} dh,
\end{equation}
where $\delta_h f(t,x) := f(t,x+h)-f(t,x)$ is the partial difference operator.  

The equation arises out of the 3 dimensional incompressible porous media equations
\begin{equation}\label{e:pme}
\left\{\begin{array}{l} \partial_t \varrho + \nabla_X \cdot(u\varrho) = 0, 
\\ u +\nabla_X P + (0,0,\varrho)=0,
\\ \nabla_X \cdot u = 0, 
\\ \varrho(t,X) = \varrho_1 \1_{D(t)}(X) + \varrho_2 \1_{\R^3\setminus D(t)}(X), \end{array}\right.
 \qquad D(t)\subseteq \R^3, \quad (t,X)\in (0,\infty)\times \R^3,
\end{equation}
where $(\varrho, u, P)$ are the fluid density, velocity, and pressure, and we require the density to be piecewise constant.  In the case that the fluid domain $D(t)$ is given as a supergraph 
 $$D(t) = \{X = (x,x_3)\in \R^3| x_3> f(t,x)\},$$ and the heavier fluid is below the lighter fluid (i.e., $\varrho_1 < \varrho_2)$, the system \eqref{e:pme} simplifies to an equation for the interface \eqref{e:fequation} after proper renormalization.  See \cite{Localwellpose} for derivation.

The system \eqref{e:pme} was first considered by Muskat in \cite{Muskat} when he was studying the encroachment of water into tar sand.  While we only study the stable regime in the density jump case, numerous other physically relevant situations can be considered.  The interface can be a more general surface (rather than a graph) \cite{Localwellpose}, the fluids can have different viscosities \cite{ViscosityIllpose, Porous, ViscosityJump}, surface tension effects can be accounted for \cite{Surf1, Surf2, Surf3}, and boundary effects in the finite depth case \cite{FiniteDepth1, FiniteDepth2}.  In the ill posed case where the denser fluid is above the lighter fluid, weak mixing solutions have also been studied \cite{Mixing1, Mixing2}.  

For Lipschitz solutions \eqref{e:fequation} is uniformly parabolic equation of order one.  This ellipticity can also be seen by linearizing around a flat interface, giving the fractional heat equation
\begin{equation}\label{e:linear}
\partial_t f(t,x) = -(-\Delta_x)^{1/2}f(t,x).
\end{equation}

The equation \eqref{e:fequation} was first shown to be locally well-posed in $H^k$ in \cite{Localwellpose} for $k\geq 4$.  Since then, \eqref{e:fequation} has been proven to be locally well-posed in typical subcritical spaces \cite{ParaDiff, ParaLin, WellposeLp}.  It has maximal principles in $L^2$ and $L^\infty$, but unlike in the linear case \eqref{e:linear} these do not imply any gain in regularity.  

One of the most interesting features of \eqref{e:fequation} is that despite being a locally well-posed parabolic equation with maximum principles, it is not globally well-posed.  \cite{Breakdown} proved that wave turning can happen in finite time, causing a failure of the Rayleigh-Taylor condition.  That is, there exists a smooth solution $f$ of \eqref{e:fequation} and time $0<T<\infty$ such that 
$$\displaystyle\lim\limits_{t\to T-} ||\nabla_x f(t,\cdot)||_{L^\infty} = \infty,$$
after which the interface is no longer parametrized as a graph.  Though the evolution of the interface is still well defined as a more general smooth surface for some amount of time after $T$, an example was constructed in \cite{Blowup} with blowup in $C^4$.  Once there is wave turning, the behavior of the interface can become quite complicated and difficult to predict, with solutions having been shown to switch between the stable and unstable regimes multiple times before regularity breaks down \cite{StabilityShift, StabilityShift2}.

As the behavior becomes much more complicated after wave turning, there has been a lot of study of what kinds of assumptions on initial data can rule out this behavior and give us global wellposedness, with most works focusing on the simpler 2D case (one dimensional interface).  Solutions to the Muskat equation \eqref{e:fequation} are preserved under the rescaling $r^{-1}f(r t, r x)$, so these assumptions have typically taken the form of upper bounds on $f_0$ in scaling invariant norms.  

In 2D, \cite{Globalold} shows a global classical solution exists whenever the initial data satisfies $||f_0||_1 = ||\ |\xi| \hat{f}_0(\xi)||_{L_\xi^1(\R)} $ less than some explicit constant, which was improved to $\approx 1/3$ in \cite{3DSlope}.  In this case \cite{Decay} proves optimal decay estimates on the norms $||f(t,\cdot)||_s = ||\ |\xi|^s \hat{f}(t,\xi)||_{L_\xi^1}$, matching the estimates for the linear case \eqref{e:linear}.  Under the weaker assumption that $||f'_0||_{L^\infty}<1$, \cite{Globalold} also showed that a maximum principle holds for the slope and global Lipschitz weak solutions exist.  The authors of \cite{Monotonic} were also able show a maximum principle for the slope and the existence of global weak solutions as well, but under the assumption that the initial data $f_0$ was monotonic rather than slope less than 1.  In our earlier work \cite{MuskatMe}, we connected these two results by proving that there exist global classical solutions whenever the product of the maximal and minimal slope is less than 1, which amounts to an angular condition on the unit normal of the graph.  Using a reformulation of \eqref{e:fequation} and a number of Besov space estimates, \cite{H3/2} develops a $\dot{H}^{3/2}$ critical theory for the Muskat problem under a bounded slope assumption.  The authors of \cite{ConstantinMain} made great progress towards proving global regularity by improving the existing continuation criteria from the $C^{2,\delta}$ established in \cite{Localwellpose} to $C^1$.  That is, they proved that if the initial data $f_0\in H^k(\R)$, then the solution $f$ will exist and remain in $H^k$ so long as the slope $f'(t,\cdot)$ remains bounded and uniformly continuous.  

In 3D, there are a handful of results.  In \cite{3DSlope} the authors show that global classical solutions exist when the initial data $f_0$ satisfies $|| \ |\xi| \hat{f}||_{L^1_{\xi}(\R^2)}$ is less than some constant $k_0 \approx 1/5$.  Notably, \cite{ViscosityJump} was able to replicate this result even when the viscosity of the two fluids are distinct, which is the first such result in the viscosity jump case.  The authors of \cite{3DSlope} also prove a maximum principle for the slope and global weak solutions  whenever the initial slope $||\nabla_x f_0 ||_{L^\infty(\R^2)}$ is suitably bounded.  In the paper they state the theorem for $||\nabla_x f_0||_{L^\infty}< 1/3$, but a careful reading of their proof of Theorem 4.1 shows that this holds in fact for the improved bound $||\nabla_x f_0||_{L^\infty} < 5^{-1/2}$ .  Under this assumption of slope less than $5^{-1/2}$, we proved the Muskat problem was globally wellposed, with global classical solutions obeying the comparison principle in \cite{3DMuskatMe}.  Recently, \cite{3DH2} was able to prove global well-posedness in the critical space $\dot{H}^2\cap \dot{W}^{1,\infty}$ whenever $||f_0||_{\dot{H}^2}\leq \mathcal{F}(||\nabla_x f_0||_{L^\infty})$, thus allowing for arbitrarily large slopes so long as the $H^2$ norm is sufficiently small.

\section{Main Results}

\begin{theorem}\label{t:title}
Fix some Lipschitz constant $2\leq L<\infty$.  Then there is a time $T^*=T^*(L)$  such that the following holds: if $f:[0,T^*]\times \R^{2}\to [0,1]$ is a classical, periodic solution to the stable Muskat problem \eqref{e:fequation} with $||\nabla_x f||_{L^\infty}\leq L$, then $f$ extends to a unique classical solution $f:[0,\infty)\times \R^2\to \R$ to \eqref{e:fequation}.  The time $T^*$ grows at most polynomially in $L$, with 
\begin{equation}
T^*\leq C L^5 \ln(L).
\end{equation}

\end{theorem}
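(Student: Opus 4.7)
The natural strategy is to show that the parabolic smoothing inherent in \eqref{e:fequation} drives $\|\nabla_x f(t,\cdot)\|_{L^\infty}$ below the critical threshold $5^{-1/2}$ of \cite{3DMuskatMe} by time $T^*$, and then to invoke the global classical existence theorem established there. Since $f$ is periodic with values in $[0,1]$, the oscillation $\osc f(t,\cdot) := \sup_x f(t,x) - \inf_x f(t,x)$ is initially bounded by $1$, while the Lipschitz bound $L$ gives $|\delta_h f(0,x)| \leq \min(L|h|, 1)$. These two bounds are the only information about the datum that is needed.

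\textbf{Step 1 (no slope blowup and effective ellipticity).} For each fixed $h$, the finite difference $\delta_h f$ satisfies a linear nonlocal parabolic equation obtained by subtracting \eqref{e:fequation} at $x$ from \eqref{e:fequation} at $x+h$. A maximum/comparison principle for that linear equation shows that $\sup_x |\delta_h f(t,x)|$ is non-increasing in $t$ for every $h$; in particular $\|\nabla_x f(t,\cdot)\|_{L^\infty} \leq L$ and $\osc f(t,\cdot) \leq 1$ persist on $[0,T^*]$. Consequently the Muskat kernel is pinched, $(1+L^2)^{-3/2}|h|^{-3} \leq (\delta_h f^2 + |h|^2)^{-3/2} \leq |h|^{-3}$, so that \eqref{e:fequation} behaves as a uniformly parabolic nonlocal operator of order one whose effective diffusion constant $\lambda$ satisfies $\lambda \gtrsim L^{-3}$.

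\textbf{Step 2 (quantitative smoothing and conclusion).} Using the pinched ellipticity of Step 1, construct explicit supersolutions (for instance, shifted and rescaled $3$D Poisson kernels, the fundamental solution of $\partial_t g = -\lambda(-\Delta_x)^{1/2}g$) to establish two quantitative estimates: (a) exponential oscillation decay $\osc f(t,\cdot) \lesssim e^{-c t L^{-3}}$, obtained for a periodic solution by damping the lowest nonzero Fourier mode at rate $\lambda$; and (b) a pointwise slope smoothing $\|\nabla_x f(t,\cdot)\|_{L^\infty} \lesssim (L^3/t)\osc f(t/2,\cdot)$, mirroring the $L^1$ bound on the gradient of the Poisson kernel. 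Combining the two yields $\|\nabla_x f(t,\cdot)\|_{L^\infty} \lesssim (L^3/t)e^{-ctL^{-3}}$; optimizing in $t$ shows the right-hand side drops below $5^{-1/2}$ as soon as $t = T^* \lesssim L^5 \ln L$. Applying \cite{3DMuskatMe} with the datum $f(T^*,\cdot)$ yields the required global classical extension.

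\textbf{Main obstacle.} The crux is Step 2: both the polynomial factor $L^3$ in the gradient smoothing and the exponential rate in the oscillation decay must be honest, quantitative bounds rather than qualitative statements, so that the $L^5 \ln L$ time scale is preserved. Standard De Giorgi or Krylov-Safonov machinery would suffice qualitatively but would hide the explicit $L$-dependence; one must instead build barriers that respect the nonlinear Muskat kernel itself and track all constants cleanly through the comparison. The self-dependent, nonlocal nature of \eqref{e:fequation} makes this barrier construction considerably more delicate than in the linear fractional heat equation, and this is where the bulk of the technical work lies.
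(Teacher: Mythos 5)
Your top-level strategy is the same as the paper's: show the slope drops below the $5^{-1/2}$ threshold of \cite{3DMuskatMe} and then invoke that global theory. But the mechanism you propose for the flattening is different, and both of your quantitative ingredients in Step 2 have serious problems.

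First, the exponential oscillation decay in (a) cannot be period-independent. For the linearized equation $\partial_t f = -(-\Delta_x)^{1/2} f$, the lowest nonzero Fourier mode decays at rate $\sim 2\pi / P$ where $P$ is the period, and the nonlinear equation inherits the same scaling. The theorem (and the paper's remark after Theorem \ref{t:main}) requires $T^*$ to be independent of the period, so any argument that leans on oscillation decay to supply a decaying factor is not available. The paper avoids this entirely: its modulus $\omega(t,r) = j(t) + (\text{concave function of } r)$ stays uniformly bounded in $r$ for all time (no oscillation decay), and the flattening is recorded purely in the behavior of $\omega$ near $r=0$, where the slope improves from $L$ to $\nu(L)\sim L^{-2}$ as $j(t)\to 0$.

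Second, the slope smoothing estimate in (b), $\|\nabla_x f(t)\|_{L^\infty} \lesssim (L^3/t)\,\osc f$, is precisely the hard part and you do not supply it; it is not an input one can assume. De Giorgi--Krylov--Safonov type theory for this kernel gives only H\"older regularity, and upgrading to Lipschitz with an explicit, clean constant is not a standard step. In fact, note the internal inconsistency: if (b) held, then with $\osc f\leq 1$ you would already have $\|\nabla_x f(t)\|_{L^\infty}\lesssim L^3/t < 5^{-1/2}$ after $t\sim L^3$, i.e.\ a \emph{stronger} conclusion $T^*\sim L^3$ without any need for the exponential factor. That you then "optimize" to recover $L^5\ln L$ is a sign the bound is being reverse-engineered rather than derived. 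The paper's route is different in kind: it is a Kiselev-style breakthrough argument with an explicit time-dependent modulus of continuity (Lemmas \ref{l:breakthrough}--\ref{l:jdefn}); rather than proving a general smoothing estimate, it chooses a specific candidate $\omega(t,r)$ and verifies at any hypothetical first touching point that the nonlocal drift pushes $f$ strictly inside the barrier. This sidesteps the need for (b) altogether and is where the $L^5\ln L$ rate actually comes from.

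Finally, your Step 1 assertion that $\sup_x |\delta_h f(t,x)|$ is non-increasing for every $h$ is false in general: the wave-turning examples of \cite{Breakdown} have $\|\nabla_x f(t)\|_{L^\infty}\to\infty$ in finite time, which contradicts that monotonicity. You do not need it here since the Lipschitz bound on $[0,T^*]$ is part of the hypothesis, but the claim indicates a misreading of what is known; the known slope maximum principles (\cite{Globalold, 3DSlope}) require the initial slope to already be small. The ellipticity pinching you state is correct and is also used in the paper (it is exactly where the constant $\nu(L)$ comes from in Lemma \ref{l:monotonicity}).
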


Our strategy for proving Theorem \ref{t:title} is to show that any classical solution flattens over time, and then apply the globally well-posedness result of \cite{3DMuskatMe}.  As the solution $f$ is bounded, on large scales it already appears very flat.  The Lipschitz bound guarantees that \eqref{e:fequation} is a uniformly elliptic equation, and we use this ellipticity to show that flatness propagates from large scales to progressively smaller scales.

For our proof of flattening, we take inspiration from Kiselev's proof of eventual H\"older regularization for supercritical Burger's equation \cite{KiselevSurvey}.   In order to measure this improvement in flatness, we define a modulus $\omega: [0,\infty)^2\to [0,\infty)$ by 
\begin{equation}\label{e:omegadefn}
\omega(t,r) = j(t) + \left\{\begin{array}{ll} \nu(L)(r - \frac{r^{3/2}}{2^{3/2}}), & 0\leq r\leq 2, \\ \frac{\nu(L)}{2}r, & 2\leq r \leq \frac{2}{\nu(L)}, \\ 1, & r\geq \frac{2}{\nu(L)}, \end{array}\right. 
\end{equation}
where the constant $\nu(L)$ is given by 
\begin{equation}
\nu(L) = \frac{L}{3(L^2+1)^{3/2}},
\end{equation}
and the function $j: [0,\infty)\to [0,1]$ is a decreasing function with $j(0)=1$ and $j(T^*) = 0$.  The function $j(t)$ essentially measures how far in the $L^\infty$ norm we allow our function $f(t,\cdot)$ to be from a $\nu(L)$-Lipschitz function.  
See Lemma \ref{l:jdefn} for the precise definition of $j(t)$ and the time $T^*$.  

\begin{theorem}\label{t:main}
Let $f:[0,T^*]\times \R^2\to [0,1]$ be a classical, periodic solution to the Muskat equation \eqref{e:fequation} with $||\nabla_x f||_{L^\infty}\leq L$.  Then 
\begin{equation}
f(t,x)-f(t,y)\leq \omega(t,|x-y|), \qquad \forall t\in [0,T^*], \ x,y\in \R^2.  
\end{equation}
In particular, $f(T^*,\cdot)$ is a $\nu(L)$-Lipschitz function.  
\end{theorem}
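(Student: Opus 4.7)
The plan is to run a touching-modulus argument in the spirit of Kiselev--Nazarov--Volberg, using the concavity of $\omega(t,\cdot)$ near the origin as the engine of dissipation, and the decreasing function $j(t)$ as the slack that absorbs whatever growth the argument cannot kill at larger scales.

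I begin by checking the $t=0$ case: because $f$ takes values in $[0,1]$ and $j(0)=1$, one has $f(0,x)-f(0,y)\leq 1\leq \omega(0,|x-y|)$ trivially. By periodicity and continuity, if the conclusion failed there would be a first breakthrough time $t_0\in(0,T^*]$ and points $x_0\neq y_0$ at which equality $f(t_0,x_0)-f(t_0,y_0)=\omega(t_0,r_0)$ is attained, where $r_0=|x_0-y_0|$. A case check rules out $r_0=0$ (impossible when $j(t_0)>0$, since then $\omega(t_0,0)=j(t_0)>0$) and the flat region $r_0\geq 2/\nu(L)$ (where $\omega\geq 1\geq f(x)-f(y)$, strictly unless $j(t_0)=0$), so $r_0\in(0,2/\nu(L))$. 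At this first touching, the standard first-order conditions for the maximum of $(x,y)\mapsto f(t_0,x)-f(t_0,y)-\omega(t_0,|x-y|)$ give $\nabla_x f(t_0,x_0)=\nabla_x f(t_0,y_0)=\omega'(t_0,r_0)e$ with $e=(x_0-y_0)/r_0$; in particular, the gradient correction $\nabla_x f\cdot h$ in \eqref{e:fequation} is identical at the two points.

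I would then differentiate $f(t,x_0)-f(t,y_0)$ at $t=t_0$ using \eqref{e:fequation}. The touching configuration yields the pointwise increment bounds $\delta_h f(t_0,x_0)\leq \omega(t_0,|r_0 e+h|)-\omega(t_0,r_0)$ and $\delta_h f(t_0,y_0)\geq \omega(t_0,r_0)-\omega(t_0,|r_0 e-h|)$, obtained by comparing $f(t_0,x_0+h)$ and $f(t_0,y_0+h)$ to the opposite touching point through the modulus. Substituting these and symmetrizing $h\leftrightarrow -h$, the difference of the two Muskat integrals reduces to an expression dominated by the second difference $\omega(t_0,|r_0 e+h|)+\omega(t_0,|r_0 e-h|)-2\omega(t_0,r_0)$ integrated against a positive kernel controlled by the Lipschitz constant $L$. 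The concavity of $\omega(t_0,\cdot)$ on $[0,2]$, built in via the explicit $-r^{3/2}/2^{3/2}$ correction, makes this second difference strictly negative on a set of $h$ of positive measure, producing an explicit dissipation estimate $\partial_t[f(t_0,x_0)-f(t_0,y_0)]\leq -D(r_0,L)$ with $D>0$. To block the breakthrough I need $j'(t_0)<-D(r_0,L)$; taking the infimum over $r_0\in(0,2/\nu(L))$ yields the ODE that $j(t)$ must satisfy, and integrating from $j(0)=1$ to $j(T^*)=0$ defines $T^*$.

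The main difficulty, as I see it, is producing a dissipation bound that is uniform in $r_0$. In the concave window $r_0\in(0,2)$ the estimate follows a classical Kiselev--Nazarov--Volberg pattern. On the linear piece $r_0\in[2,2/\nu(L)]$, however, $\omega(t_0,\cdot)$ has vanishing second difference at scale $r_0$, so the dissipation must come from values of $|h|$ either small enough to access the concave region $|r_0 e\pm h|\leq 2$ or large enough to push $|r_0 e\pm h|$ into the saturated region at $2/\nu(L)$; making this precise requires a careful splitting of the integral. The particular choice $\nu(L)=L/(3(L^2+1)^{3/2})$ looks calibrated to keep $\omega'(0+)=\nu(L)$ small enough that the denominator factor $(\omega'(r_0)^2+1)^{3/2}$ in the nonlocal term is dominated by $(L^2+1)^{3/2}$, keeping every estimate in terms of the a priori constant $L$. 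The slowest scale of dissipation should occur near $r_0\sim 2/\nu(L)$, which is polynomially large in $L$, and integrating the resulting ODE for $j$ across this worst scale is what is expected to produce the $L^5\ln L$ bound on $T^*$.
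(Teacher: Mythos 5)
Your proposal follows essentially the same strategy as the paper: a Kiselev-style breakthrough argument with a time-dependent modulus $\omega(t,\cdot)$ whose $j(t)$ slack is dissipated by an ODE, using the first-order touching conditions, the Lipschitz-derived ellipticity factor $(L^2+1)^{-3/2}$, and the concavity of $\omega$. The only pieces you leave implicit are the two technical lemmas the paper isolates — the elementary monotonicity inequality that linearizes the Muskat kernel (Lemma~\ref{l:monotonicity}) and the Kiselev rearrangement bound reducing the $\R^2$-integral to a one-dimensional integral in $\omega$ (Lemma~\ref{l:ftoomegabound}) — but your sketch correctly identifies both where they are needed and what they must deliver.
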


As $\nu(L) < 5^{-1/2}$, Theorem \ref{t:main} combined with the global well-posedness result from \cite{3DMuskatMe} thus proves Theorem \ref{t:title}.

\begin{remark}
We note that while for technical reasons we require $f$ to be periodic in Theorem \ref{t:title} and \ref{t:main}, the time $T^*$ is independent of the exact period, and the theorem should be true for general bounded solutions.  Periodicity is only used in the proof of Lemma \ref{l:breakthrough}, and any other assumption that allows us to guarantee a first crossing point will also suffice.  
\end{remark}

\section{Necessary Lemmas}

\begin{lemma}\label{l:breakthrough}(Breakthrough Argument)
Let $f:[0,T^*]\times \R^{2}\to [0,1]$ be a classical, periodic solution of the Muskat equation \eqref{e:fequation}.  Then either 
\begin{equation} \label{e:controloverf}
f(t,x)-f(t,y) \leq \omega(t,|x-y|), \qquad \forall t\in [0,T^*], x,y\in \R^{2},
\end{equation}
or there exists a first crossing point.  I.e., there is some time $t_0\in (0,T^*)$ and points $x_0\not = y_0\in \R^{2}$ such that 
\begin{equation}\label{e:crossingpointassumptions}
\left\{\begin{array}{l}\delta_h f(t_0,x)\leq \omega(t_0, |h|), \qquad x,h\in \R^{2}, \\ f(t_0,x_0)-f(t_0,y_0) = \omega(t_0, |x_0-y_0|), \\ \partial_t \omega(t_0, |x_0-y_0|)\leq \partial_t f(t_0,x_0) - \partial_t f(t_0,y_0).\end{array}\right.
\end{equation}
\end{lemma}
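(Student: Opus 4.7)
My plan is the standard modulus-of-continuity argument: introduce the overshoot
\begin{equation*}
\Phi(t) := \sup_{x,y \in \R^2} \bigl[ f(t,x) - f(t,y) - \omega(t, |x-y|) \bigr]
\end{equation*}
and show that either $\Phi \leq 0$ on $[0,T^*]$ (which is \eqref{e:controloverf}) or a first crossing point can be extracted. The initial condition $\Phi(0) < 0$ follows immediately from $f(0,\cdot) \in [0,1]$: a supremum equal to zero would require simultaneously $f(0,x) - f(0,y) = 1$ and $\omega(0,|x-y|) = 1$, but $\omega(0,0) = j(0) = 1$ and $\omega(0,r) > 1$ strictly for $r > 0$, so the only candidate is $x = y$, which forces $f(0,x)-f(0,y) = 0$. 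Hence $\Phi(0) < 0$ with a uniform gap.

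Next I would establish compactness and continuity. By periodicity of $f$, the quantity $f(t,x) - f(t,x+h) - \omega(t,|h|)$ descends to $\T^2 \times \R^2$, and for $|h| \geq 2/\nu(L)$ it is bounded above by $1 - (j(t)+1) \leq 0$, so the supremum is in fact attained on a compact subset of $\T^2 \times \R^2$. Consequently $\Phi(t)$ is realized for every $t$, and $\Phi$ is continuous in $t$ by uniform continuity of $f$ on $[0,T^*] \times \T^2$ and of $\omega$. Now, assuming \eqref{e:controloverf} fails, set $t_0 := \inf\{t \in [0,T^*] : \Phi(t) > 0\}$. The initial strict gap together with continuity gives $t_0 > 0$ and $\Phi(t_0) = 0$; if one had $t_0 = T^*$, continuity would force $\Phi \leq 0$ on all of $[0,T^*]$, contradicting the hypothesis. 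Hence $t_0 \in (0,T^*)$.

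Pick $(x_0,y_0)$ attaining $\Phi(t_0) = 0$. If $x_0 = y_0$ then $\omega(t_0,0) = 0$, i.e.\ $j(t_0) = 0$, contradicting $j(t) > 0$ for $t < T^*$, so $x_0 \neq y_0$. For the time-derivative inequality, let $g(t) := f(t,x_0) - f(t,y_0) - \omega(t,|x_0-y_0|)$: on $[0,t_0]$ one has $g(t) \leq \Phi(t) \leq 0$ and $g(t_0) = 0$, so $t_0$ is a left-maximum of $g$. Since $f$ is classical and $\omega(\cdot,r)$ is $C^1$ in $t$, $g$ is differentiable at $t_0$ with $g'(t_0) \geq 0$, which is precisely the third line of \eqref{e:crossingpointassumptions}. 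The main obstacle is the compactness/continuity step: periodicity is exactly what ensures the sup is attained and that $\Phi$ is continuous, and it is the reason periodicity appears in the hypothesis.
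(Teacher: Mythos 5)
Your proposal is correct and follows essentially the same strategy as the paper's proof: use the $L^\infty$ bound $f\in[0,1]$ together with $\omega(0,\cdot)\geq j(0)=1$ to get a strict gap at $t=0$, use periodicity and the fact that $\omega(t,r)=j(t)+1\geq 1$ for $r\geq 2/\nu(L)$ to obtain compactness (so that the relevant supremum is attained and varies continuously in $t$), define the first crossing time, rule out $x_0=y_0$ via $j(t_0)>0$, and deduce the time-derivative inequality from the left-maximum property. Packaging the argument through $\Phi(t)=\sup_{x,y}[f(t,x)-f(t,y)-\omega(t,|x-y|)]$ is a cosmetic reformulation of the paper's ``sup of times for which the strict inequality holds,'' not a different method.
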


\begin{proof}
Suppose that \eqref{e:controloverf} does not hold, and define the time $t_0$ by 
\begin{equation}
\sup\{t\in [0,T^*]: f(s,x)-f(s,y)< \omega(s,|x-y|), \quad \forall s\leq t, x,y\in \R^{2}\}. 
\end{equation}
Note that at time $t=0$, we have trivially by our $L^\infty$ bounds that for any $x, y\in \R^{2}$ 
\begin{equation}
f(0,x) - f(0,y) < \omega(0,|x-y|).
\end{equation}
As $f$ is a continuous periodic function, we thus have that there is some $\epsilon>0$ such that 
\begin{equation}
f(0,x) - f(0,y)\leq \epsilon +\omega(0,|x-y|).
\end{equation}
Thus by uniform continuity, the same will hold for sufficiently small times $t$.  Thus $t_0\in (0,T^*]$ is well defined.  And as we are assuming \eqref{e:controloverf} does not hold, we necessarily have that $t_0\in (0,T^*)$.  
Because $f$ is continuous, at time $t_0$ we must have that for all $x,y\in \R^{2}$
\begin{equation}
f(t_0,x)-f(t_0,y)\leq \omega(t_0,|x-y|).
\end{equation}
Following the same logic that we used at time $t=0$, then by the definition of $t_0$ we must have that there exist two points $x_0,y_0\in \R^{2}$ such that 
\begin{equation}
f(t_0,x_0) - f(t_0,y_0)  = \omega(t_0,|x_0-y_0|).
\end{equation}
As $f$ is a well defined function, clearly $x_0\not = y_0$.  And by the definition of $t_0$ we have that for any $\epsilon>0$, 
\begin{equation}
f(t_0-\epsilon,x_0) - f(t_0-\epsilon,y_0) < \omega(t_0-\epsilon, |x_0-y_0|).
\end{equation}
Thus 
\begin{equation}
\partial_t \omega(t_0,|x_0-y_0|)\leq \partial_t f(t_0,x_0) - \partial_t f(t_0,y_0).
\end{equation}

\end{proof}

\begin{lemma}\label{l:growthbounds}
Let $f:[0,T^*]\times\R^{2}\to \R$ be a classical $L-$Lipschitz solution to the Muskat equation \eqref{e:fequation}.  Suppose that the crossing point assumptions \eqref{e:crossingpointassumptions} hold for the function $\omega$ defined in \eqref{e:omegadefn}.  Then 
\begin{equation}
\left\{\begin{array}{l} \delta_h f(t_0,x_0)\leq \delta_h f(t_0,y_0),   \ \qquad\qquad \forall h\in \R^2,
\\-L|h|\leq \delta_h f(t_0,x_0) \leq \nu(L)|h|,  \qquad \forall h\in \R^{2},
\\ - \nu(L)|h| \leq \delta_h f(t_0,y_0) \leq L|h|,  \qquad \forall h\in \R^{2}, 
\\ \nabla_x f(t_0,x_0) = \nabla_x f(t_0,y_0) = \partial_r \omega(t_0,|x_0-y_0|) \frac{x_0-y_0}{|x_0-y_0|}.  \end{array}\right.  
\end{equation}
%In particular, we have that 
%\begin{equation}
%\int\limits_{\R^{d-1}} \frac{\delta_h f(t_0,x_0)  - \nabla_x f(t_0,x_0)\cdot h}{(\delta_hf(t_0,x_0)^2+|h|^2)^{d/2}} -  \frac{\delta_h f(t_0,y_0)  - \nabla_x f(t_0,y_0)\cdot h}{(\delta_hf(t_0,y_0)^2+|h|^2)^{d/2}} dh \leq \frac{1}{3(L^2+1)^{d/2}}\int\limits_{\R^{d-1}} \frac{\delta_h f(t_0,x_0) - \delta_h f(t_0,y_0)}{|h|^{d}} dh.
%\end{equation}
\end{lemma}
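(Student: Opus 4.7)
The plan is to extract every claim from the single pointwise inequality
$f(t_0,a)-f(t_0,b)\leq \omega(t_0,|a-b|)$ by evaluating at clever shifts of $x_0,y_0$ and subtracting off the contact equality $f(t_0,x_0)-f(t_0,y_0)=\omega(t_0,|x_0-y_0|)$. One preparatory fact drives essentially everything: $\omega(t_0,\cdot)$ is $\nu(L)$-Lipschitz on $[0,\infty)$. Indeed, differentiating \eqref{e:omegadefn} branch by branch gives $\nu(L)(1-3\cdot 2^{-5/2}r^{1/2})\in[\nu(L)/4,\nu(L)]$ on $[0,2]$, the constant $\nu(L)/2$ on $[2,2/\nu(L)]$, and $0$ beyond, all bounded in absolute value by $\nu(L)$.

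The first three inequalities then fall out mechanically. Plugging $(a,b)=(x_0+h,y_0+h)$ yields $|a-b|=|x_0-y_0|$, so the pointwise bound and the equality combine to give $\delta_h f(t_0,x_0)\leq \delta_h f(t_0,y_0)$. Plugging $(a,b)=(x_0+h,y_0)$, subtracting the equality, and applying the $\nu(L)$-Lipschitz bound on $\omega$ together with $|x_0+h-y_0|-|x_0-y_0|\leq |h|$ yields $\delta_h f(t_0,x_0)\leq \nu(L)|h|$. The opposite inequality $\delta_h f(t_0,x_0)\geq -L|h|$ is immediate from the Lipschitz hypothesis on $f$, and the corresponding estimates on $\delta_h f(t_0,y_0)$ follow symmetrically by choosing $(a,b)=(x_0,y_0+h)$.

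For the gradient identity I would introduce the auxiliary function
$g(h):=f(t_0,x_0+h)-f(t_0,y_0)-\omega(t_0,|x_0+h-y_0|)$, which by the pointwise inequality satisfies $g\leq 0$ with equality at $h=0$. Hence $h=0$ is an interior maximum of $g$, and first-order optimality forces $\nabla g(0)=0$, giving $\nabla_x f(t_0,x_0)=\partial_r\omega(t_0,|x_0-y_0|)\tfrac{x_0-y_0}{|x_0-y_0|}$. Applying the same argument to $\tilde g(h):=f(t_0,x_0)-f(t_0,y_0+h)-\omega(t_0,|x_0-y_0-h|)$ returns the identical vector for $\nabla_x f(t_0,y_0)$.

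The main obstacle is handling the corners of $\omega$ at $r=2$ and $r=2/\nu(L)$, where $\omega(t_0,\cdot)$ fails to be differentiable. At a generic crossing radius $|x_0-y_0|$ avoids these values and the first-order argument is direct; at $|x_0-y_0|\geq 2/\nu(L)$ one has $\omega(t_0,|x_0-y_0|)\geq j(t_0)+1\geq 1$, so $f\in[0,1]$ forces $f$ to attain its extrema at $x_0$ and $y_0$ and both gradients vanish, in agreement with $\partial_r\omega=0$.
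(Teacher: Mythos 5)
Your proof is essentially identical to the paper's. The paper proves the $\pm L|h|$ bounds directly from the Lipschitz hypothesis, obtains $\delta_h f(t_0,x_0)\leq\nu(L)|h|$ by writing $\delta_h f(t_0,x_0)=(f(t_0,x_0+h)-f(t_0,y_0))-(f(t_0,x_0)-f(t_0,y_0))\leq\omega(t_0,|x_0+h-y_0|)-\omega(t_0,|x_0-y_0|)\leq\nu(L)|h|$, obtains the monotonicity inequality from the shifted pair $(x_0+h,y_0+h)$, and gets the gradient identity from the observation that $x\mapsto\omega(t_0,|x-y_0|)+f(t_0,y_0)$ touches $f(t_0,\cdot)$ from above at $x_0$ and $y\mapsto f(t_0,x_0)-\omega(t_0,|x_0-y|)$ touches from below at $y_0$. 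These are exactly your shifts $(x_0+h,y_0+h)$, $(x_0+h,y_0)$, $(x_0,y_0+h)$ and your auxiliary functions $g,\tilde g$. You supply more explicit detail (the branch-by-branch check that $|\partial_r\omega|\leq\nu(L)$), and your observation that the case $|x_0-y_0|\geq 2/\nu(L)$ is vacuous is stated in the paper just before Lemma \ref{l:integralfbound} is applied, not inside this proof.

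One remark: you correctly flag that $\omega(t_0,\cdot)$ has a kink at $r=2$ (the one-sided slopes are $\nu(L)/4$ and $\nu(L)/2$), but dismissing it because a ``generic'' crossing radius avoids $r=2$ is not a proof---the crossing radius is dictated by the solution, not chosen. The paper has the same tacit omission. The clean fix is to note that at a convex kink the touching argument still yields $\nabla_x f(t_0,x_0)=\nabla_x f(t_0,y_0)=c\,\frac{x_0-y_0}{|x_0-y_0|}$ for some $c$ between the one-sided derivatives, hence $|c|\leq\nu(L)$, which is all that Lemma \ref{l:monotonicity} consumes downstream (it only uses $|\beta|\leq\nu(L)$). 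So the ambiguity is harmless, but deserves that one sentence rather than an appeal to genericity.
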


\begin{proof}
By our $L$-Lipschitz assumption, we already have for free that 
\begin{equation}
|\delta_h f(t_0,x)| \leq L |h|, \qquad \forall x,h\in \R^{2}.
\end{equation}
To see the stronger upper bound for $\delta_h f(t_0,x_0)$, note that by our crossing point assumptions \eqref{e:crossingpointassumptions} that 
\begin{equation}
\begin{split}
\delta_h f(t_0,x_0) &= f(t_0,x_0+h)-f(t_0,x_0) = (f(t_0,x_0+h)-f(t_0,y_0)) - (f(t_0,x_0)-f(t_0,y_0)) 
\\& \leq \omega(t_0,|x_0+h-y_0|) - \omega(t_0,|x_0-y_0|)\leq \nu(L)|h|.
\end{split}
\end{equation}
The lower bound for $\delta_h f(t_0,y_0)$ follows by the same argument.  Similarly, 
\begin{equation}
\delta_h f(t_0,x_0) - \delta_h f(t_0,y_0)  = (f(t_0,x_0+h) - f(t_0, y_0+h)) - \omega(t_0,|x_0-y_0|) \leq 0.
\end{equation}
Finally, as the map $f(t_0,\cdot)$ is touched from above at $x_0$ by the map 
$x\to \omega(t_0, |x-y_0|) +f(t_0,y_0)$ and from below at $y_0$ by that map $y\to f(t_0,x_0)-\omega(t_0, |x_0-y|)$, we get the equality of the gradients above.  
\end{proof}

\begin{lemma}\label{l:monotonicity}
Fix some constant $L\geq 1$, and define $\nu(L)$ by 
\begin{equation}
\nu(L) = \frac{1}{3}\frac{L}{(L^2+1)^{3/2}}.
\end{equation}
Let $\overline{\alpha}, \underline{\alpha}, \beta\in \R$ be such that the following inequalities hold:
\begin{equation}
\left\{\begin{array}{l} \underline{\alpha} \leq \overline{\alpha}, \\ -L\leq \underline{\alpha} \leq \nu(L), \\ -\nu(L)\leq \overline{\alpha}\leq L, \\ |\beta|\leq \nu(L). \end{array}\right. 
\end{equation}
Then 
\begin{equation}\label{e:alphainequality}
\frac{\overline{\alpha} + \beta}{(\overline{\alpha}^2 + 1)^{3/2}} - \frac{\underline{\alpha} + \beta}{(\underline{\alpha}^2 + 1)^{3/2}} \geq \frac{1}{3}\frac{\overline{\alpha}  - \underline{\alpha} }{(L^2+1)^{3/2}}.
\end{equation}
\end{lemma}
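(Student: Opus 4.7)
Abbreviate $\nu := \nu(L)$ throughout. The plan is to recast the inequality \eqref{e:alphainequality} as $\eta(\overline{\alpha})\geq \eta(\underline{\alpha})$ for the auxiliary function
\[
\eta(\alpha) := \frac{\alpha+\beta}{(\alpha^2+1)^{3/2}}-m\alpha, \qquad m:=\frac{1}{3(L^2+1)^{3/2}}=\frac{\nu}{L}.
\]
First I would reduce to $\beta=\nu$: the difference $\eta(\overline{\alpha})-\eta(\underline{\alpha})$ is affine in $\beta$, so its minimum over $|\beta|\leq\nu$ is attained at $\beta=\pm\nu$; and the involution $(\overline{\alpha},\underline{\alpha},\beta)\mapsto(-\underline{\alpha},-\overline{\alpha},-\beta)$ preserves the constraint set and the quantity $\eta(\overline{\alpha})-\eta(\underline{\alpha})$, reducing the case $\beta=-\nu$ to $\beta=\nu$.

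For $\beta=\nu$, I would analyze the shape of $\eta$ on $[-L,L]$ via
\[
\eta'(\alpha)=\frac{1-2\alpha^2-3\alpha\nu}{(\alpha^2+1)^{5/2}}-m.
\]
Direct substitution (using $L\geq 1$) gives $\eta'(\pm\nu)>0$ and $\eta'(\pm L)<0$. The cubic $P(\alpha)=2\alpha^3+4\alpha^2\nu-3\alpha-\nu$ controls the sign of $\eta''$; locating its three real roots near $\pm 1/\sqrt{2}$ shows that on each half-line the function $g'(\alpha):=\eta'(\alpha)+m$ is monotone apart from at most one interior extremum. Combined with the endpoint values and the limit $g'(\alpha)\to 0^-$ as $|\alpha|\to\infty$, this forces $\eta'$ to cross zero exactly once on each of $[-L,-\nu]$ and $[\nu,L]$. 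Consequently $\eta$ is strictly increasing on $[-\nu,\nu]$, has a unique local minimum in $(-L,-\nu)$, and a unique local maximum in $(\nu,L)$, so its maximum on $[-L,-\nu]$ and minimum on $[\nu,L]$ are attained at endpoints.

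Which endpoints win is a direct computation using the identity $(L^2+1)^{-3/2}=3\nu/L$ built into the definition of $\nu(L)$:
\[
\eta(L)-\eta(\nu)=2\nu\bigl(1-(\nu^2+1)^{-3/2}\bigr)+\frac{4\nu^2}{L},
\]
\[
\eta(-\nu)-\eta(-L)=2\nu(1-\nu/L), \qquad \eta(\nu)-\eta(-\nu)=2\nu\bigl((\nu^2+1)^{-3/2}-\nu/L\bigr).
\]
For $L\geq 1$ all three are nonnegative, since $\nu\leq L$, $(\nu^2+1)^{-3/2}\leq 1$, and $(\nu^2+1)^{-3/2}\geq\nu/L$ (the left side is close to $1$, while the right is bounded by $\nu\leq 1/6$). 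Combined with the shape above, this gives $\max_{[-L,\nu]}\eta=\eta(\nu)$ and $\min_{[-\nu,L]}\eta=\eta(-\nu)$, with $\eta(-\nu)\leq\eta(\nu)$.

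The proof then finishes by case analysis on the positions of $\overline{\alpha}$ and $\underline{\alpha}$ relative to $\pm\nu$, using the constraints $\underline{\alpha}\leq\overline{\alpha}$, $\underline{\alpha}\leq\nu$, and $\overline{\alpha}\geq-\nu$ to conclude $\eta(\overline{\alpha})\geq\eta(\underline{\alpha})$ in every case. I expect the main obstacle to be the shape step: because $g'$ can itself develop an interior extremum (at least for large $L$), one must argue carefully through the monotonicity structure of $g'$ dictated by $P$ that $\eta'$ cannot swing back up past zero once it has left the central region, yielding exactly one sign change on each side. The remaining steps are explicit algebra.
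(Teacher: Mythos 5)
Your proposal is correct but takes a genuinely different route from the paper. The paper's proof is essentially a one-liner: it asserts, without further justification, that the difference quotient $(\overline{\alpha}-\underline{\alpha})^{-1}\bigl(g(\overline{\alpha})-g(\underline{\alpha})\bigr)$, where $g(\alpha)=(\alpha+\beta)(\alpha^2+1)^{-3/2}$, is minimized over the constraint set at the corner $(\overline{\alpha},\underline{\alpha},\beta)=(L,\nu,\nu)$, and then evaluates there using $(L^2+1)^{-3/2}=3\nu/L$. You instead absorb the linear term into the auxiliary function $\eta$, use affinity in $\beta$ plus the reflection symmetry to fix $\beta=\nu$, establish a shape theorem for $\eta$ by tracking the sign of $\eta''$ via the cubic $P$, and close with a four-way case analysis on the positions of $\underline{\alpha},\overline{\alpha}$ relative to $\pm\nu$. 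Your version is longer but actually proves the extremality the paper merely asserts, and it makes visible exactly where the constraints $\underline{\alpha}\leq\nu$, $\overline{\alpha}\geq-\nu$, and $\underline{\alpha}\leq\overline{\alpha}$ are used; the paper's version buys brevity at the cost of leaving the key minimization claim unverified.

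Two small items to tidy. First, the three real roots of $P(\alpha)=2\alpha^3+4\alpha^2\nu-3\alpha-\nu$ are not near $\pm1/\sqrt2$; for small $\nu$ they sit near $0$ and $\pm\sqrt{3/2}$, and it is the roots of $P'$ that lie near $\pm1/\sqrt2$. This does not harm the argument, since all you need is that $P$ changes sign at most once on each of $[\nu,L]$ and $[-L,-\nu]$ (so $\eta'$ is unimodal there, and the endpoint signs $\eta'(\pm\nu)>0$, $\eta'(\pm L)<0$ force exactly one zero per side). Second, the summary ``$\max_{[-L,\nu]}\eta=\eta(\nu)$ and $\min_{[-\nu,L]}\eta=\eta(-\nu)$, with $\eta(-\nu)\leq\eta(\nu)$'' reads as though you want $\min_{[-\nu,L]}\eta\geq\max_{[-L,\nu]}\eta$, which points the wrong way and is false. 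What your case analysis actually uses is $\max_{[-L,-\nu]}\eta=\eta(-\nu)$, $\min_{[\nu,L]}\eta=\eta(\nu)$, $\eta$ increasing on $[-\nu,\nu]$, and $\eta(\nu)\geq\eta(-\nu)$, together with $\underline{\alpha}\leq\overline{\alpha}$; stating it that way removes the apparent contradiction.
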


\begin{proof}
It is straightforward to check that the difference in \eqref{e:alphainequality} is nonnegative.  The quantity
\begin{equation}
\frac{1}{\overline{\alpha}  - \underline{\alpha}} \left(\frac{\overline{\alpha} + \beta}{(\overline{\alpha}^2 + 1)^{3/2}} - \frac{\underline{\alpha} + \beta}{(\underline{\alpha}^2 + 1)^{3/2}} \right),
\end{equation}
is minimized when $\overline{\alpha} = L$, $\underline{\alpha} = \nu(L)$, and $\beta = \nu(L)$.  Then 
\begin{equation}
\frac{L+\nu(L)}{(L^2+1)^{3/2}} - \frac{2\nu(L)}{(\nu(L)^2+1)^{3/2}} \geq \frac{L}{(L^2+1)^{3/2}} - 2\nu(L) \geq \frac{1}{3} \frac{L}{(L^2+1)^{3/2}} >\frac{1}{3}\frac{L-\nu(L)}{(L^2+1)^{3/2}}.
\end{equation}
\end{proof}

\begin{lemma}\label{l:integralfbound}
Let $f:[0,T^*]\times \R^{2}\to \R$ be an $L$-Lipschitz classical solution to the Muskat equation \eqref{e:fequation} that satisfies the crossing point assumptions \eqref{e:crossingpointassumptions}.  Then 
\begin{equation}
\partial_t f(t_0,x_0) - \partial_t f(t_0,y_0) \leq \frac{1}{3(L^2+1)^{3/2}} \int\limits_{\R^{2}} \frac{\delta_h f(t_0,x_0) - \delta_h f(t_0,y_0)}{|h|^{d}} dh.
\end{equation}
\end{lemma}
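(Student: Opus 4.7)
The plan is to apply Lemma \ref{l:monotonicity} pointwise in $h$ to the difference of integrands defining $\partial_t f$ via \eqref{e:fequation}. Using the gradient equality $\nabla_x f(t_0,x_0) = \nabla_x f(t_0,y_0) =: A$ provided by the last item of Lemma \ref{l:growthbounds}, the linear correction $\nabla_x f \cdot h$ is identical in both integrals, so
\[
\partial_t f(t_0,x_0)-\partial_t f(t_0,y_0) = \int_{\R^{2}}\left[\frac{\delta_h f(t_0,x_0)-A\cdot h}{(\delta_h f(t_0,x_0)^2+|h|^2)^{3/2}} - \frac{\delta_h f(t_0,y_0)-A\cdot h}{(\delta_h f(t_0,y_0)^2+|h|^2)^{3/2}}\right]dh.
\]

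For each $h\neq 0$, I would factor $|h|^3$ out of each denominator and $|h|$ out of each numerator by introducing
\[
\underline{\alpha}:=\frac{\delta_h f(t_0,x_0)}{|h|}, \qquad \overline{\alpha}:=\frac{\delta_h f(t_0,y_0)}{|h|}, \qquad \beta:=\frac{A\cdot h}{|h|},
\]
which recasts the integrand as $\frac{1}{|h|^2}\bigl[\frac{\underline{\alpha}-\beta}{(\underline{\alpha}^2+1)^{3/2}}-\frac{\overline{\alpha}-\beta}{(\overline{\alpha}^2+1)^{3/2}}\bigr]$. The next step is to verify the hypotheses of Lemma \ref{l:monotonicity}: the ordering $\underline{\alpha}\leq\overline{\alpha}$ is the first item of Lemma \ref{l:growthbounds}; the two-sided bounds on $\underline{\alpha}$ and $\overline{\alpha}$ are the second and third items; and $|\beta|\leq |A|\leq \nu(L)$, since $A = \partial_r\omega(t_0,|x_0-y_0|)\frac{x_0-y_0}{|x_0-y_0|}$ by the last item of Lemma \ref{l:growthbounds} and a direct inspection of \eqref{e:omegadefn} shows $|\partial_r\omega|\leq \nu(L)$ on $(0,\infty)$.

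Applying Lemma \ref{l:monotonicity} with $\beta$ replaced by $-\beta$ (which lies in the same admissible range) and negating the resulting inequality yields the pointwise bound
\[
\frac{\underline{\alpha}-\beta}{(\underline{\alpha}^2+1)^{3/2}}-\frac{\overline{\alpha}-\beta}{(\overline{\alpha}^2+1)^{3/2}} \leq \frac{\underline{\alpha}-\overline{\alpha}}{3(L^2+1)^{3/2}}.
\]
Multiplying by $|h|^{-2}$ and translating back to the original variables gives the integrand bound $\frac{\delta_h f(t_0,x_0)-\delta_h f(t_0,y_0)}{3(L^2+1)^{3/2}|h|^3}$, which is integrable near $h=0$ because the gradient coincidence $\nabla_x f(t_0,x_0)=\nabla_x f(t_0,y_0)$ forces the numerator to be $O(|h|^2)$ there. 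Integrating in $h$ delivers the claim. The only real obstacle is bookkeeping: checking that the four hypotheses of Lemma \ref{l:monotonicity} hold uniformly in $h$, and carefully tracking the sign flip in the substitution $\beta\mapsto -\beta$.
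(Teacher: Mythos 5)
Your proof is correct and follows essentially the same route as the paper: plug the Muskat formula into $\partial_t f(t_0,x_0)-\partial_t f(t_0,y_0)$, use the gradient coincidence from Lemma \ref{l:growthbounds}, rescale by $|h|$, and invoke Lemma \ref{l:monotonicity} pointwise. The paper's proof is a one-liner that simply names the substitutions $\underline{\alpha},\overline{\alpha},\beta$ without tracking signs; you correctly notice that the integrand of \eqref{e:fequation} carries a $-\nabla f\cdot h$ and therefore requires applying Lemma \ref{l:monotonicity} with $-\beta$ (equally admissible since $|\beta|\le\nu(L)$) before negating, and you also supply the verification of the hypotheses and of integrability near $h=0$ that the paper leaves implicit.
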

\begin{proof}
Simply use the Muskat equation \eqref{e:fequation} and apply Lemma \ref{l:growthbounds} and Lemma \ref{l:monotonicity} with 
\begin{equation}
\underline{\alpha} = \frac{\delta_h f(t_0,x_0)}{|h|}, \quad \overline{\alpha} = \frac{\delta_h f(t_0,y_0)}{|h|}, \quad \beta =  \partial_r \omega(t_0,|x_0-y_0|) \frac{(x_0-y_0)}{|x_0-y_0|}\cdot\frac{h}{|h|}.
\end{equation}

\end{proof}

\begin{lemma}\label{l:ftoomegabound} 
Let $f$ satisfy the crossing point assumptions \eqref{e:crossingpointassumptions}, and $\xi = |x_0-y_0|>0$.  Then suppressing the time variable, we have that
\begin{equation}
\int\limits_{\R^{2}}\frac{\delta_h f(x_0)-\delta_h f(y_0)}{|h|^d}dh \leq \int\limits_{0}^{\xi/2} \frac{\omega(\xi + 2\eta)+\omega(\xi-2\eta) - 2\omega(\xi)}{\eta^2}d\eta + \int\limits_{\xi/2}^\infty \frac{\omega(2\eta+ \xi ) - \omega( 2\eta-\xi) - 2\omega(\xi)}{\eta^2}d\eta .
\end{equation}
\end{lemma}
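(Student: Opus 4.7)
The plan is to exploit the touching conditions at the crossing point to produce a pointwise upper bound on $\delta_h f(x_0) - \delta_h f(y_0)$, and then reduce the resulting two-dimensional integral to the claimed one-dimensional form. First, writing $e = (x_0 - y_0)/\xi \in S^1$, I observe that the crossing equality $f(x_0) - f(y_0) = \omega(\xi)$ together with the global modulus $\delta_k f(x) \leq \omega(|k|)$ says that $f$ is touched from above at $x_0$ by $g_+(x) := f(y_0) + \omega(|x-y_0|)$ and from below at $y_0$ by $g_-(y) := f(x_0) - \omega(|x_0 - y|)$. Substituting the bounds $f(x_0 + h) \leq g_+(x_0 + h) = f(y_0) + \omega(|\xi e + h|)$ and $f(y_0 + h) \geq g_-(y_0 + h) = f(x_0) - \omega(|\xi e - h|)$ into the identity $\delta_h f(x_0) - \delta_h f(y_0) = [f(x_0 + h) - f(y_0 + h)] - \omega(\xi)$ gives the pointwise inequality
\[
\delta_h f(x_0) - \delta_h f(y_0) \leq \omega(|\xi e + h|) + \omega(|\xi e - h|) - 2\omega(\xi).
\]

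The next step is the reduction to a one-dimensional integral. Rescaling $h = 2\eta\, e_\theta$ with $\eta > 0$, $e_\theta \in S^1$, converts the kernel via $dh/|h|^3 = d\eta\, d\theta/(2\eta^2)$, so the integral in question becomes
\[
\int_0^\infty \frac{d\eta}{2\eta^2} \int_0^{2\pi}\bigl[\omega(|\xi e + 2\eta e_\theta|) + \omega(|\xi e - 2\eta e_\theta|) - 2\omega(\xi)\bigr] d\theta.
\]
When $h$ is aligned with $\pm e$ one has $|\xi e + h| = \xi + 2\eta$ and $|\xi e - h| = |\xi - 2\eta|$, which is the source of the ``aligned'' integrands appearing on the right-hand side. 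The split at $\eta = \xi/2$ corresponds to the geometric transition where the distance $|\xi e - 2\eta e|$ switches from $\xi - 2\eta$ (when $\eta < \xi/2$) to $2\eta - \xi$ (when $\eta > \xi/2$), producing the second-difference integrand $\omega(\xi + 2\eta) + \omega(\xi - 2\eta) - 2\omega(\xi)$ in the first regime and the shifted first-difference integrand $\omega(2\eta + \xi) - \omega(2\eta - \xi) - 2\omega(\xi)$ in the second.

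The main obstacle will be performing the angular integration so that the result is exactly the claimed one-dimensional bound rather than a weaker version; the pointwise inequality above is generally loose on directions transverse to $e$, so the argument must exploit the specific 2D Riesz structure. A convenient device is to pass to elliptic coordinates with foci at $\pm \xi e$, introducing $\alpha = |\xi e + h|$ and $\beta = |\xi e - h|$ as coordinate variables on the region $\{\alpha + \beta \geq 2\xi,\ |\alpha - \beta| \leq 2\xi\}$. In these coordinates the kernel $|h|^{-3}\,dh$ acquires an explicit Jacobian, and after the further substitution $s = (\alpha + \beta)/2$, $d = (\alpha - \beta)/2$ followed by $s = 2\eta$, the transverse variable $d \in [-\xi, \xi]$ is integrated out against its explicit weight to produce the precise 1D integrands appearing in the lemma, with the separatrix $s = \xi$ (equivalently $\eta = \xi/2$) of the coordinate system accounting for the regime split. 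The careful bookkeeping of these Jacobians and the verification that integrating out $d$ yields exactly the stated expressions is the heart of the technical work.
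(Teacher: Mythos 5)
The paper itself does not prove this lemma; it cites it as Lemma A.2 (or similar) from the appendix of \cite{KiselevIntRearrange}. So there is no in-paper proof to compare against, but your proposal can still be evaluated on its merits, and it has a real gap.

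Your pointwise bound $\delta_h f(x_0) - \delta_h f(y_0) \leq \omega(|\xi e + h|) + \omega(|\xi e - h|) - 2\omega(\xi)$ is correct, but it cannot, by itself, yield the stated one-dimensional bound. The problem is visible already in the ``far'' regime $\eta > \xi/2$. Write $h_1 = h\cdot e$. For $h_1 > \xi$ and $h_2 = 0$, the points $x_0 + h_1 e$ and $y_0 + h_1 e$ are still at distance $\xi$, so the global modulus gives the sharp bound $\delta_h f(x_0) - \delta_h f(y_0) \leq 0$. Your pointwise bound at that same $h$ is $\omega(h_1 + \xi) + \omega(h_1 - \xi) - 2\omega(\xi)$, which for large $h_1$ equals $2 - 2\omega(\xi) > 0$ (since $\omega(\xi) < 1$ when $\xi < 2/\nu(L)$). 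In other words, in that region your bound is strictly weaker than the trivial one. Meanwhile, the integrand on the right side of the lemma in that regime is
\begin{equation*}
\omega(2\eta + \xi) - \omega(2\eta - \xi) - 2\omega(\xi),
\end{equation*}
with a \emph{minus} sign on the middle term, which for large $\eta$ is close to $-2\omega(\xi) < 0$. There is no way an angular (or elliptic-coordinate) integration of a pointwise bound with the \emph{plus} sign on the middle term can collapse down to a strictly smaller expression with a \emph{minus} sign; integrating out the transverse variable of a pointwise upper bound against a positive weight produces an upper bound that is at least as large as the transverse average, not smaller. So the claim in your last paragraph --- that ``integrating out $d$ yields exactly the stated expressions'' --- is not something that can happen with this input; the output would be the genuinely weaker quantity $\omega(2\eta+\xi) + \omega(2\eta-\xi) - 2\omega(\xi)$.

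The actual argument in \cite{KiselevIntRearrange} is a monotone rearrangement argument, not a pointwise bound plus coordinate change. It uses simultaneously the fact that $\delta_h f(x_0) - \delta_h f(y_0) \leq 0$ everywhere (not just the looser elliptic-coordinate bound), that the Riesz kernel $|h|^{-3}$ is radially decreasing, and that the admissible profiles of $\delta_h f(x_0) - \delta_h f(y_0)$ along a line are constrained by the modulus, to show that the worst-case configuration for the two-dimensional integral is the one described by the stated one-dimensional quantity. Your proposal also has a smaller bookkeeping issue: after setting $\alpha = |\xi e + h|$, $\beta = |\xi e - h|$, the variable $s = (\alpha+\beta)/2$ ranges only over $[\xi,\infty)$ by the triangle inequality, so the putative substitution $s = 2\eta$ never produces the near regime $\eta < \xi/2$ at all. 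Taken together, this is a plan with a plausible-sounding first step, but the step that would make it a proof --- the rearrangement --- is missing, and the substitute you sketch (pointwise bound plus Jacobian bookkeeping) demonstrably cannot produce the claimed inequality.
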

Lemma \ref{l:ftoomegabound} is due to \cite{KiselevIntRearrange}, and its proof can be found in the appendix of that paper.

\begin{lemma}\label{l:integralomegabound}
Let $\omega$ be as in \eqref{e:omegadefn}.  Then for any $0<\xi < \displaystyle\frac{2}{\nu(L)}$, we have that 
\begin{equation}
\begin{split}
 \int\limits_{0}^{\xi/2} \frac{\omega(\xi + 2\eta)+\omega(\xi-2\eta) - 2\omega(\xi)}{\eta^2}d\eta &+ \int\limits_{\xi/2}^\infty \frac{\omega(2\eta+ \xi ) - \omega( 2\eta-\xi) - 2\omega(\xi)}{\eta^2}d\eta  
 \\&< - \min\left\{\nu(L)\omega(0) + \frac{\nu(L)^2}{2}, \omega(0)^{1/3}\nu(L)^{2/3}\right\}
 \end{split}
\end{equation}
\end{lemma}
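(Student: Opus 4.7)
The plan is to separate $\omega$ into its spatial and temporal parts by writing $\omega(t,r) = \omega(0) + \tilde\omega(r)$, where $\omega(0) = j(t)$ and $\tilde\omega$ is the piecewise profile from \eqref{e:omegadefn}. The constant-in-$r$ term cancels identically in the symmetric second difference of $I_1$ but contributes $-2\omega(0)\int_{\xi/2}^\infty \eta^{-2}\,d\eta = -\frac{4\omega(0)}{\xi}$ to $I_2$, which will furnish the main negative contribution. Next I would record the three structural features of $\tilde\omega$: it is concave on $[0,2]$ with $\tilde\omega''(r) = -\frac{3\nu(L)}{2^{7/2}}\,r^{-1/2}$, linear with slope $\nu(L)/2$ on $[2,\,2/\nu(L)]$, and constant on $[2/\nu(L),\infty)$. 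In particular $\tilde\omega$ is globally $\nu(L)$-Lipschitz with its maximum slope attained at $r=0$, while it has a convex corner at $r=2$ (slope jumps up from $\nu(L)/4$ to $\nu(L)/2$) and a concave corner at $r=2/\nu(L)$ (slope drops to $0$).

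I would then bound each integral by splitting into sub-ranges of $\eta$ according to where $\xi \pm 2\eta$ (for $I_1$) and $2\eta \pm \xi$ (for $I_2$) sit relative to the breakpoints $2$ and $2/\nu(L)$. On sub-ranges where the arguments lie entirely in the concave piece $[0,2]$, the integrand of $I_1$ is nonpositive and can be bounded above explicitly using the formula for $\tilde\omega''$, yielding a contribution of order $-\nu(L)\sqrt\xi$. Any positive contribution from the convex corner at $r=2$ is controlled by the finite slope jump $\nu(L)/4$ together with the $1/\eta^2$ decay. For $I_2$, the first difference $\tilde\omega(2\eta+\xi)-\tilde\omega(2\eta-\xi)$ vanishes once $2\eta \ge \xi/2 + 1/\nu(L)$ and is otherwise bounded by the local slope times $2\xi$, producing a positive contribution of order $\nu(L)$ at worst. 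Combining these yields an estimate of the schematic form $I_1 + I_2 \le -\frac{4\omega(0)}{\xi} - c\,\nu(L)\sqrt\xi + (\text{positive contributions of order }\nu(L))$.

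The two alternatives in the stated minimum then correspond to two regimes of $\xi$. For $\xi$ small, balancing $\omega(0)/\xi$ against the $\nu(L)\sqrt\xi$ term yields the critical scale $\xi \sim (\omega(0)/\nu(L))^{2/3}$, at which both are of order $\omega(0)^{1/3}\nu(L)^{2/3}$; this gives the second alternative. For $\xi$ close to $2/\nu(L)$, the leading negative contribution $-\frac{4\omega(0)}{\xi}$ is already of order $-\nu(L)\omega(0)$, and the concavity of $\tilde\omega$ together with the slope drop at $r=2/\nu(L)$ supplies an additional $-\nu(L)^2/2$, producing the first alternative. The main obstacle I anticipate is the case analysis itself: the piecewise definition of $\omega$ forces one to track where each of $\xi \pm 2\eta$ and $2\eta \pm \xi$ lies in the three regions $[0,2]$, $[2,\,2/\nu(L)]$, $[2/\nu(L),\infty)$, producing several sub-cases for both $I_1$ and $I_2$. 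The convex corner at $r=2$ is a genuine positive contribution that must always be dominated, and it is this delicate book-keeping between positive and negative contributions that forces the two-term minimum in the statement.
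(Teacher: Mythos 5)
Your decomposition $\omega(t,r)=\omega(0)+\tilde\omega(r)$, the identification of the $-4\omega(0)/\xi$ contribution from $I_2$, the $\tilde\omega''$-based bound giving $-c\,\nu(L)\sqrt\xi$ from $I_1$, and the final balancing to produce the two alternatives in the minimum are all in the same spirit as the paper: the paper likewise gets the $\omega(0)^{1/3}\nu^{2/3}$ alternative by balancing exactly these two quantities at $\xi\sim(\omega(0)/\nu)^{2/3}$, and gets the $\nu\omega(0)+\nu^2/2$ alternative from $I_2$ restricted to the tail $\eta\ge 2/\nu(L)$ where $\omega$ has become constant. You are, however, more careful than the paper in one respect and less careful in another, and the latter is a genuine gap.

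The careful observation: you correctly note that $\omega$ has a \emph{convex} corner at $r=2$, where $\partial_r\omega$ jumps up from $\nu(L)/4$ (the left derivative of $\nu(r-r^{3/2}/2^{3/2})$ at $r=2$) to $\nu(L)/2$. The paper's proof, for the regime $\xi\ge 1$, flatly asserts that ``by the concavity of $\omega$ in $r$, both integrals are negative,'' which is false for the $\omega$ as written in \eqref{e:omegadefn}; your book-keeping is the honest version of that step.

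The gap: you claim the positive contribution from this corner ``is controlled by the finite slope jump $\nu(L)/4$ together with the $1/\eta^2$ decay.'' It is not. At a point where a function has a one-sided slope jump of size $s>0$, the symmetric second difference $\omega(\xi+2\eta)+\omega(\xi-2\eta)-2\omega(\xi)$ behaves like $s\cdot 2\eta$ (first order in $\eta$, not second), so the integrand of $I_1$ behaves like $2s/\eta$ near $\eta=0$, which is \emph{not} integrable. Concretely, take $\xi=2$: then for small $\eta$,
\begin{equation}
\omega(2+2\eta)+\omega(2-2\eta)-2\omega(2)=\Bigl(\tfrac{\nu}{2}-\tfrac{\nu}{4}\Bigr)\,2\eta+O(\eta^{3/2})=\tfrac{\nu}{2}\eta+O(\eta^{3/2}),
\end{equation}
so $I_1=\int_0^1\bigl(\tfrac{\nu}{2\eta}+O(\eta^{-1/2})\bigr)\,d\eta=+\infty$, while $I_2$ is finite. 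For $\xi=2+\varepsilon$ the integral is finite but of size $\sim\tfrac{\nu}{2}\ln(1/\varepsilon)$, which overwhelms the bounded negative contribution from $I_2$ once $\varepsilon$ is small enough. So the schematic estimate you write down, $I_1+I_2\le -4\omega(0)/\xi-c\nu\sqrt\xi+O(\nu)$, is not attainable uniformly for $\xi$ near $2$, and the two-sided case analysis you propose cannot close as described. This is not a book-keeping nuisance; it is a divergence, and it cannot be absorbed by the $\nu(L)$-sized negative terms. (The paper avoids confronting it only because it incorrectly asserts concavity; the underlying issue is the same.) To make this approach work, the modulus $\tilde\omega$ must be made $C^1$ across $r=2$, e.g.\ by replacing the coefficient $2^{-3/2}$ with $1/(3\sqrt2)$ and adjusting the affine middle piece accordingly so that the one-sided derivatives match; once $\tilde\omega$ is genuinely concave, your argument (and the paper's) goes through.
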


\begin{proof}
Suppose that $\xi\geq 1$.  Note that by the concavity of $\omega$ in $r$, both integrals 
\begin{equation}
 \int\limits_{0}^{\xi/2} \frac{\omega(\xi + 2\eta)+\omega(\xi-2\eta) - 2\omega(\xi)}{\eta^2}d\eta, \int\limits_{\xi/2}^\infty \frac{\omega(2\eta+ \xi ) - \omega( 2\eta-\xi) - 2\omega(\xi)}{\eta^2}d\eta  <0.
\end{equation}
Thus it suffices for us to bound the second integral.  By restricting our integral to $\eta\geq \displaystyle\frac{2}{\nu(L)}> \frac{1}{\nu(L)}+\frac{\xi}{2}$, we have that 
\begin{equation}
 \int\limits_{\xi/2}^\infty \frac{\omega(2\eta+ \xi ) - \omega( 2\eta-\xi) - 2\omega(\xi)}{\eta^2}d\eta  <\int\limits_{2/\nu(L)}^\infty \frac{-2\omega(\xi)}{\eta^2}d\eta \leq \omega(\xi)\nu(L) \leq \omega(0)\nu(L)+ \frac{\nu(L)^2}{2}.
\end{equation}

Now suppose that $0<\xi \leq 1$.  Using that 
\begin{equation}
\omega(2\eta+ \xi ) - \omega( 2\eta-\xi) - 2\omega(\xi) \geq -2\omega(0),
\end{equation}
we get immediately that 
\begin{equation}\label{e:reallysmallxi}
 \int\limits_{\xi/2}^\infty \frac{\omega(2\eta+ \xi ) - \omega( 2\eta-\xi) - 2\omega(\xi)}{\eta^2}d\eta \leq \frac{-4\omega(0)}{\xi}.
\end{equation}
As for the other integral, rescaling gives us immediately that 
\begin{equation}\label{e:lesssmallxi}
 \int\limits_{0}^{\xi/2} \frac{\omega(\xi + 2\eta)+\omega(\xi-2\eta) - 2\omega(\xi)}{\eta^2}d\eta  =\frac{-\nu(L)\sqrt{\xi}}{2^{3/2}}\int\limits_0^{1/2} \frac{(1+2\eta')^{3/2} + (1-2\eta')^{3/2}-2}{(\eta')^2} d\eta' < \frac{-3\nu(L)\sqrt{\xi}}{2^{5/2}}.
\end{equation}
If $\omega(0)\geq 8\nu(L)$, then \eqref{e:reallysmallxi} suffices.  Else, applying \eqref{e:lesssmallxi} for $1\geq \xi\geq \displaystyle\frac{\omega(0)^{2/3}}{4\nu(L)^{2/3}}$ and \eqref{e:reallysmallxi} for the rest gives 
\begin{equation}
\int\limits_{0}^{\xi/2} \frac{\omega(\xi + 2\eta)+\omega(\xi-2\eta) - 2\omega(\xi)}{\eta^2}d\eta + \int\limits_{\xi/2}^\infty \frac{\omega(2\eta+ \xi ) - \omega( 2\eta-\xi) - 2\omega(\xi)}{\eta^2}d\eta  < -(\omega(0)\nu(L)^2)^{1/3}
\end{equation}
\end{proof}

\begin{lemma}\label{l:jdefn}
Define $j: [0,\infty)\to [0,1]$ to be the unique, nonnegative solution of the initial value problem
\begin{equation}
j'(t) = \left\{\begin{array}{ll} \frac{-\nu(L)^2}{L}j(t), & \frac{\nu(L)}{2}< j(t)\leq 1 ,  \\ \frac{-\nu(L)^3}{2L}, & \frac{\nu(L)^4}{8} < j(t) \leq \frac{\nu(L)}{2},  
\\ -\frac{\nu(L)^{5/3}}{L}j(t)^{1/3}, &  0\leq j(t)\leq \frac{\nu(L)^4}{8} \end{array}\right.
\end{equation}
with $j(0)=1$.  
Then $j \equiv 0$ after some finite time $T^*$, with 
\begin{equation}
T^* := \sup\{ t| j(t)>0\} = \frac{L\ln(2/\nu(L))}{\nu(L)^2} + \left(\frac{\nu(L)}{2} - \frac{\nu(L)^4}{8}\right)\frac{2L}{\nu(L)^3} + \frac{3}{8}\nu(L)L .
\end{equation}
Furthermore, $j\in C^1([0,T^*))$ and for any time $t\in (0,T^*)$ 
\begin{equation}
j'(t) \geq - \min\left\{\frac{\nu(L)^2}{L}j(t) + \frac{\nu(L)^3}{2L},\frac{\nu(L)^{5/3}}{L} j(t)^{1/3}\right\}.
\end{equation}
\end{lemma}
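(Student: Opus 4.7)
The plan is to solve the piecewise autonomous ODE explicitly in each of its three regions, concatenate the pieces, and then verify the regularity and inequality claims by direct computation.

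First I would check that the right-hand side $F(j)$ defining the ODE is continuous on $[0,1]$. At the transition $j=\nu(L)/2$, the linear expression $-\nu(L)^2 j/L$ matches the constant $-\nu(L)^3/(2L)$; at $j=\nu(L)^4/8$, the constant matches $-\nu(L)^{5/3} j^{1/3}/L$. These are routine arithmetic identities. Since $F$ is continuous on $[0,1]$ and locally Lipschitz on $(0,\infty)$, classical ODE theory produces a unique local solution as long as $j>0$. Solving each phase explicitly: Phase 1 is the linear ODE giving $j(t)=e^{-\nu(L)^2 t/L}$, decaying from $1$ to $\nu(L)/2$ in time $T_1 = L\ln(2/\nu(L))/\nu(L)^2$; Phase 2 gives $j$ linear, reaching $\nu(L)^4/8$ in additional time $T_2 = (\nu(L)/2 - \nu(L)^4/8)\cdot 2L/\nu(L)^3$; and Phase 3, by separation of variables, makes $\tfrac{3}{2}j^{2/3}$ linear in $t$, hitting $0$ in additional time $T_3 = 3\nu(L)L/8$. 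Summing yields the stated formula for $T^*$, and extending by $j \equiv 0$ past $T^*$ defines $j$ on $[0,\infty)$.

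The $C^1$ regularity on $[0,T^*)$ is immediate from the setup: on each open phase the solution is smooth and satisfies $j'(t) = F(j(t))$, while the verified continuity of $F$ at the two transition values forces the one-sided derivatives at phase junctions to agree.

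For the final inequality, in each phase I would check that $-j'(t)=|F(j(t))|$ is bounded above by each of the two terms inside the minimum. In Phase 1 the bound $\nu(L)^2 j/L \leq \nu(L)^2 j/L + \nu(L)^3/(2L)$ is trivial, and $\nu(L)^2 j/L \leq \nu(L)^{5/3} j^{1/3}/L$ reduces to $j^{2/3} \leq \nu(L)^{-1/3}$, which follows from $j \leq 1$ and $\nu(L) \leq 1$. In Phase 2, $\nu(L)^3/(2L) \leq \nu(L)^{5/3} j^{1/3}/L$ is equivalent to $j \geq \nu(L)^4/8$, the defining condition of the phase. The main obstacle, as I see it, is the Phase 3 check that $\nu(L)^{5/3} j^{1/3} \leq \nu(L)^2 j + \nu(L)^3/2$ on $[0,\nu(L)^4/8]$. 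I would handle it by studying $h(j) := \nu(L)^{5/3} j^{1/3} - \nu(L)^2 j$: its unique interior critical point $j_c = 3^{-3/2}\nu(L)^{-1/2}$ lies well beyond $\nu(L)^4/8$ in the regime $\nu(L)\ll 1$ relevant to the theorem, so $h$ is monotone increasing on $[0,\nu(L)^4/8]$ and attains its maximum value $\nu(L)^3/2 - \nu(L)^6/8 \leq \nu(L)^3/2$ at the right endpoint, closing the estimate.
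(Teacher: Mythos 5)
Your proof is correct and carries out the elementary verification that the paper leaves to the reader (the paper only remarks that the lemma ``easily follows by the definition of the function $j(t)$''). The explicit phase-by-phase solution and the resulting formula for $T^*$, the $C^1$-matching at the two transition values via continuity of the right-hand side there, and the three-case check of the final inequality --- including the Phase~3 step, where the interior critical point of $j\mapsto \nu(L)^{5/3}j^{1/3}-\nu(L)^2 j$ at $j_c=3^{-3/2}\nu(L)^{-1/2}$ indeed lies past $\nu(L)^4/8$ since $\nu(L)<1$, so evaluating at the right endpoint gives $\nu(L)^3/2-\nu(L)^6/8\le\nu(L)^3/2$ --- are all accurate.
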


The lemma easily follows by the definition of the function $j(t)$.  Note that as $\nu(L)\sim L^{-2}$, it follows that for $L\geq 2$ 
\begin{equation}
T^* \lesssim L^5\ln(L).
\end{equation}

\section{Proof of Theorem \ref{t:main}}

Fix some Lipschitz constant $L<\infty$.  Take $T^*$ and $j:[0,\infty)\to [0,1]$ to be as in Lemma \ref{l:jdefn}, and the modulus $\omega:[0,\infty)^2\to [0,\infty)$ be as in \eqref{e:omegadefn}.  Let $f:[0,T^*]\times \R^2\to [0,1]$ be a classical, periodic solution to the Muskat equation \eqref{e:fequation} with $||\nabla_x f||_{L^\infty}\leq L$.

We wish to show that for all $t\in [0,T^*]$ and $x,y\in \R^2$ that 
\begin{equation}\label{e:maingoal}
f(t,x)-f(t,y)\leq \omega(t,|x-y|).
\end{equation}
Suppose that \eqref{e:maingoal} was false.  Then by the break through argument Lemma \ref{l:breakthrough}, we have that there exists a time $t_0\in (0,T^*)$ and points $x_0,y_0\in \R^2$ such that 
\begin{equation}\label{e:crossingpointassumptions2}
\left\{\begin{array}{l}\delta_h f(t_0,x)\leq \omega(t_0, |h|), \qquad x,h\in \R^{2}, \\ f(t_0,x_0)-f(t_0,y_0) = \omega(t_0, |x_0-y_0|), \\ \partial_t \omega(t_0, |x_0-y_0|)\leq \partial_t f(t_0,x_0) - \partial_t f(t_0,y_0).\end{array}\right.
\end{equation}
Note that as $\omega(t_0,r)>1$ for $r\geq \displaystyle\frac{2}{\nu(L)}$, we must necessarily have that $|x_0-y_0|< \displaystyle\frac{2}{\nu(L)}$.  Applying the lemmas \ref{l:integralfbound}, \ref{l:ftoomegabound}, and \ref{l:integralomegabound} along with \eqref{e:crossingpointassumptions2} then gives us that 
\begin{equation}
j'(t_0) = \partial_t \omega(t_0,|x_0-y_0|) < - \min\left\{\frac{\nu(L)^2}{L}j(t_0) + \frac{\nu(L)^3}{2L},\frac{\nu(L)^{5/3}}{L} j(t_0)^{1/3}\right\}.
\end{equation}
But by construction, we have that the function $j$ satisfies 
\begin{equation}
j'(t)\geq - \min\left\{\frac{\nu(L)^2}{L}j(t) + \frac{\nu(L)^3}{2L},\frac{\nu(L)^{5/3}}{L} j(t)^{1/3}\right\},
\end{equation}
a contradiction.  Thus \eqref{e:maingoal} must hold, and Theorem \ref{t:main} is proven.  

\section*{Acknowledgements}  
The author was partially supported by an NSF postdoctoral fellowship, NSF DMS 1902750.

\bibliography{Muskat-References}{}
\bibliographystyle{alpha}

\end{document}